\newtheorem{thm}{Theorem}[section]
\newtheorem{lem}[thm]{Lemma}
\newtheorem{ex}[thm]{Example}
\newtheorem{defn}[thm]{Definition}
\begin{document}

\begin{center}
{\Large \textbf{INTEGER REPRESENTATIONS OF CLASSICAL WEYL GROUPS}} \vspace*{0.5cm}
\end{center}

\vspace*{0.3cm}
\begin{center}
HASAN ARSLAN$^{*,1}$, ALNOUR ALTOUM$^{*,2}$, MARIAM ZAAROUR$^{*,3}$ \\
$^{*}${\small {\textit{Department of  Mathematics, Faculty of Science, Erciyes University, 38039, Kayseri, Turkey}}}\\
{\small {\textit{ $^{1}$hasanarslan@erciyes.edu.tr}}}~~{\small {\textit{$^{2}$ alnouraltoum178@gmail.com}}}\\
{\small {\textit{$^{3}$ mariamzaarour94@gmail.com}}}\\[0pt]
\end{center}

\begin{abstract}
In this paper, we define a mixed-base number system over a Weyl group of type $D$, the group even-signed permutations. We introduce one-to-one correspondence between positive integers and elements of Weyl groups of type $D$ after constructed the subexceedant function associating to the group. Thus, the integer representations of all classical Weyl groups are now completed.    
\end{abstract}

\textbf{Keywords}: Even-Signed Permutation, Weyl group, Subexceedant function.\\

\textbf{2020 Mathematics Subject Classification}: 20F55. 
\\

\section{Introduction} 
Integer representation of any element of a classical Weyl group $W$ is a crucial tool to understand the structure of the group and to use efficiently the elements of the group in encryption process. In the case of the symmetric group, Doliskani et al. \cite{br1} introduced a bijection map between positive integers and elements of symmetric groups, Weyl groups of type $A$. They then proposed an effective cryptosystem with the help of this map. When $W$ is of type $B$, the hyperoctahedral base system was described and studied the representation of the elements of Weyl groups of type $B$ by integers by Raharinirina \cite{br4}. Subsequently, some cryptosystems, robust and resistant to attacks by Silver-Pohlig-Helman's algorithm, were established in \cite{br4}. 

Let $S_n$, which is a Weyl group of type $A_{n-1}$, be  the symmetric group of order $n!$. We denote by $B_n$ the group, which is known as the group of signed permutations acting on the set $I_n=\{-n, \cdots, -1, 1, \cdots, n\}$, consists of all bijections $\alpha$ of the set $I_n$ onto itself providing that $\alpha(-i)=-\alpha(i)$ for all $i \in I_n$, where the group operation is the composition of the bijections. Any element $\alpha$ of the group $B_n$ is represented in the window notation such that 
$$\alpha=[s_1 \beta_1, \cdots, s_n \beta_n ]$$
where $\beta \in S_n$,~~$\beta(i)=\beta_i$ and $s_i \in \{-1,1\}$ for all $i \in \{1, \cdots, n\}$. It is well-known that the cardinality of the Weyl group of type $B_n$ is $2^nn!$. As a convention, when multiplying permutations, the rightmost permutations acts first, as usual.

The group of even-signed permutation is denoted by $D_n$, normal subgroup of $B_n$, is the group of signed permutations acting on $I_n$ with an even number of negative entries in their window notation. The size of the Weyl group of type $D_n$ is $2^{n-1}n!$
For further information about the classical Weyl groups see \cite{br2}.

\section{Construction of $D_n$-type Number System}
In this section, we first define the $D_n$-type number system and describe its the structure. 
\begin{defn}
The $D_n$-type number system is a radix base system in which every positive integer $x$ can be expressed in the following form:
\begin{equation}\label{def}
  x=\sum_{i=1}^{n-1} d_iD_i  
\end{equation}
where $d_i \in \{0, 1, 2, \cdots, 2i+1\}$ and $D_i=2^{i-1}i!$ for all the $1 \leq i \leq n-1$. 
\end{defn}
Then, for any positive integer $x$ in the $Dn$-type number system we use the notation 
\[
x=(d_{n-1}:d_{n-2}: \cdots : d_2: d_1)_{D_n}.
\]

There is one-to-one correspondence between positive integers and $D_n$-type number system. 

\begin{thm}\label{exp}
Every positive integer is represented in a unique way in the $D_n$-type base system. 
\end{thm}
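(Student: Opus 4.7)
The plan is to view the $D_n$-type representation as a mixed-radix (factorial-like) base with digit bounds $b_i=2i+2$, and verify that it behaves like any mixed-radix system. First I would check that the factors $D_i=2^{i-1}i!$ satisfy the recursion $D_{i+1}=(2i+2)D_i$, which means the $D_i$'s are precisely the running products $D_i=\prod_{j=1}^{i-1}(2j+2)$, with the admissible digit $d_i$ ranging over $\{0,1,\dots,b_i-1\}$. In particular the total number of admissible digit-tuples $(d_1,\dots,d_{n-1})$ is $\prod_{i=1}^{n-1}(2i+2)=2^{n-1}n!=D_n$, which matches $|D_n|$ and is the target cardinality of the representable range.

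The technical heart of the argument is the telescoping identity
\begin{equation}\label{telescope}
\sum_{i=1}^{k}(2i+1)\,D_i \;=\; D_{k+1}-1,
\end{equation}
which I would prove by induction on $k$. The base case $k=1$ reads $3\cdot 1=4-1$, and the inductive step uses $(2k+2)D_k=D_{k+1}$ so that $(D_k-1)+(2k+1)D_k=(2k+2)D_k-1=D_{k+1}-1$. This identity will play two roles: it pins down the maximal representable value as $D_n-1$, and it supplies the strict bound needed for uniqueness.

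For existence I would use the greedy (division-with-remainder) algorithm: given a positive integer $x<D_n$, set $d_{n-1}=\lfloor x/D_{n-1}\rfloor$, which lies in $\{0,1,\dots,2n-1\}$ since $x<D_n=(2n)D_{n-1}$, and then recurse on $x-d_{n-1}D_{n-1}<D_{n-1}$. Iterating yields a digit-tuple satisfying \eqref{def}. For uniqueness, suppose $(d_i)$ and $(d'_i)$ yield the same integer; let $k$ be the largest index with $d_k\neq d'_k$, say $d_k>d'_k$. Then
\[
D_k \;\leq\; (d_k-d'_k)D_k \;=\; \sum_{i=1}^{k-1}(d'_i-d_i)D_i \;\leq\; \sum_{i=1}^{k-1}(2i+1)D_i \;=\; D_k-1
\]
by \eqref{telescope}, a contradiction.

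The only step that needs any thought is identity \eqref{telescope}; after that, existence is a straightforward greedy argument and uniqueness is immediate. As a cross-check (and an alternative route), one could skip the greedy algorithm altogether: the injectivity argument above together with the count of $D_n$ admissible tuples and the fact that every value lies in $[0,D_n-1]$ forces the map from tuples to integers to be a bijection onto $\{0,1,\dots,D_n-1\}$ by pigeonhole.
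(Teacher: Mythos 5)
Your proof is correct and rests on the same foundation as the paper's: the telescoping relation $(2i+1)D_i=D_{i+1}-D_i$ (equivalently your identity $\sum_{i=1}^{k}(2i+1)D_i=D_{k+1}-1$, which is the content of the paper's Lemma \ref{first}) followed by a comparison of the highest digits at which two representations disagree. The differences are organizational rather than substantive: you dispose of uniqueness in a single inequality chain by isolating the largest index $k$ with $d_k\neq d'_k$, which also absorbs the paper's separate case $n\neq m$ by padding with zeros, whereas the paper runs an induction on the number of digits supported by its Lemma \ref{second}; and you actually prove existence, either by the top-down greedy division $d_{n-1}=\lfloor x/D_{n-1}\rfloor$ or by the pigeonhole count $\prod_{i=1}^{n-1}(2i+2)=2^{n-1}n!=D_n$, whereas the paper only exhibits a bottom-up division algorithm after the proof without verifying that it produces admissible digits. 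Both routes are sound; yours is slightly more self-contained.
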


Before giving the proof of the theorem, we need the following two lemmas, which involve some fundamental properties of the $D_n$-type number system. These properties have a similar structure to ones of the factoriadic number system of type $A$ and the hyperoctahedral base system of type $B$.

\begin{lem} \label{first}
For any $x=(d_{n-1}:d_{n-2}: \cdots : d_2: d_1)_{D_n}$, we have
\begin{equation}
    0 \leq x \leq D_{n}-1.
\end{equation}
\end{lem}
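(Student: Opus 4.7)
The lower bound is immediate: since every digit $d_i$ lies in $\{0,1,\ldots,2i+1\}$ and each $D_i = 2^{i-1}i! > 0$, the sum $x = \sum_{i=1}^{n-1} d_i D_i$ is non-negative.

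For the upper bound, my plan is to bound each digit by its maximum and then exhibit a telescoping cancellation. Concretely, I would start from
\begin{equation*}
x \;=\; \sum_{i=1}^{n-1} d_i D_i \;\leq\; \sum_{i=1}^{n-1} (2i+1)\, D_i,
\end{equation*}
and then observe the key algebraic identity
\begin{equation*}
(2i+1)\, D_i \;=\; (2i+1)\, 2^{i-1} i! \;=\; 2^{i}(i+1)!\,-\,2^{i-1} i! \;=\; D_{i+1} - D_i .
\end{equation*}
This rewriting is the only nontrivial step, and once in hand, it reduces the whole bound to a one-line telescoping argument.

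Substituting the identity into the sum, I would write
\begin{equation*}
\sum_{i=1}^{n-1} (2i+1)\, D_i \;=\; \sum_{i=1}^{n-1} \bigl( D_{i+1} - D_i \bigr) \;=\; D_n - D_1 \;=\; D_n - 1,
\end{equation*}
using $D_1 = 2^{0}\cdot 1! = 1$. Combining with the inequality above yields $x \leq D_n - 1$, completing the lemma.

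The main obstacle, if any, is spotting the telescoping factorisation $(2i+1)D_i = D_{i+1} - D_i$; everything else is routine. An alternative route would be induction on $n$, using the identity $\sum_{i=1}^{n}(2i+1)D_i = \bigl[\sum_{i=1}^{n-1}(2i+1)D_i\bigr] + (2n+1)D_n$ together with $(2n+2)D_n = D_{n+1}$, but I find the telescoping proof cleaner and more in keeping with the parallel arguments for the factoriadic and hyperoctahedral base systems already developed in \cite{br1,br4}.
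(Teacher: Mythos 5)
Your proposal is correct and follows essentially the same route as the paper: both proofs bound each digit by $2i+1$, use the identity $(2i+1)D_i = D_{i+1}-D_i$, and telescope to $D_n - D_1 = D_n - 1$. The only difference is that you spell out the telescoping sum explicitly where the paper compresses it into ``by direct calculation.''
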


\begin{proof}
We have an expression 
\[
0 \leq \sum_{i=1}^{n-1} d_iD_i \leq \sum_{i=1}^{n-1} (2i+1) D_i 
\]
due to fact that $d_i \in \{0, 1, 2, \cdots, 2i+1\}$ and $D_i=2^{i-1}i!$ for all $1 \leq i \leq n-1$. Since 
\[
(2i+1) D_i=D_{i+1}-D_i
\]
for each $1 \leq i \leq n-1$, we conclude by direct calculation that $0 \leq \sum_{i=1}^{n-1} d_iD_i \leq D_{n}-1.$
\end{proof}

As a result of Lemma \ref{first}, we can deduce that for any given positive integer n, there are exactly $2^{n-1}n!$ numbers in the $D_n$-type number system.

\begin{lem}\label{second}
Let $x=(d_{n-1}:d_{n-2}: \cdots : d_2: d_1)_{D_n}$ be a number in $D_n$-type number system, then we have
\begin{equation}
    d_{n-1}D_{n-1} \leq x < (d_{n-1}+1)D_{n-1}.
\end{equation}
\end{lem}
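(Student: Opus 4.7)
The plan is to isolate the leading term of the representation and treat the remaining tail as a valid number in a smaller $D_n$-type system. Concretely, I would write
\[
x \;=\; \sum_{i=1}^{n-1} d_i D_i \;=\; d_{n-1} D_{n-1} \;+\; \sum_{i=1}^{n-2} d_i D_i,
\]
and then bound the tail sum $T := \sum_{i=1}^{n-2} d_i D_i$ from both sides.

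For the lower bound, I would simply note that every $d_i$ is a non-negative integer and every $D_i = 2^{i-1} i!$ is positive, so $T \geq 0$ and therefore $x \geq d_{n-1} D_{n-1}$. This step is immediate and requires no further work.

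For the upper bound, the key observation is that the digit string $(d_{n-2}:\cdots:d_1)$ satisfies precisely the digit constraints of the $D_{n-1}$-type number system, since $d_i \in \{0,1,\ldots,2i+1\}$ for $1 \leq i \leq n-2$. Hence Lemma \ref{first}, applied with $n-1$ in place of $n$, yields $T \leq D_{n-1} - 1$. Adding $d_{n-1} D_{n-1}$ to both sides gives
\[
x \;\leq\; d_{n-1} D_{n-1} + D_{n-1} - 1 \;<\; (d_{n-1}+1) D_{n-1},
\]
which is the desired strict inequality. (Alternatively, one could bypass Lemma \ref{first} entirely by invoking the telescoping identity $(2i+1)D_i = D_{i+1} - D_i$ highlighted in the previous proof, which collapses $\sum_{i=1}^{n-2}(2i+1)D_i$ to $D_{n-1} - D_1 = D_{n-1} - 1$.)

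I do not anticipate any real obstacle: the statement is a standard positional-numeral-system fact, completely analogous to the observation that in base $10$ the leading digit of $x$ determines which interval $[k\cdot 10^{m}, (k{+}1)\cdot 10^{m})$ contains $x$. The only care needed is to confirm that the tail indeed lies in the range covered by Lemma \ref{first} for the smaller index, which follows transparently from the digit constraints.
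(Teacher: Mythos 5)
Your proof is correct and follows essentially the same route as the paper: split off the leading term, bound the tail $x - d_{n-1}D_{n-1}$ by applying Lemma \ref{first} with $n-1$ in place of $n$ to get $0 \leq x - d_{n-1}D_{n-1} \leq D_{n-1}-1$, and add $d_{n-1}D_{n-1}$ throughout. (Incidentally, your explicit justification that the tail satisfies the digit constraints for the smaller index is cleaner than the paper's version, which mistakenly cites Lemma \ref{second} itself rather than Lemma \ref{first} at that step.)
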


\begin{proof}
If we take $y$ as $x-d_{n-1}D_{n-1}=(d_{n-2}: \cdots : d_2: d_1)_{D_n}$, then from Lemma \ref{second}, we can write $y$ in the following way:
\begin{equation}\label{eq}
 0 \leq y \leq D_{n-1}-1.   
\end{equation}
When adding $d_{n-1}D_{n-1}$ to each side of the equation (\ref{eq}), we conclude that the proof is completed, as desired.
\end{proof}
We are now in a position to provide the proof of Theorem \ref{exp}.\\

\textit{$\boldsymbol{Proof~~of~~Theorem~~\ref{exp}}$}: \\
Assume that a positive integer $x$ has two representations in the $D_n$-type number system as follows:
\begin{equation*}
   x=(d_{n-1}:d_{n-2}: \cdots : d_2: d_1)_{D_n}=(e_{m-1}:e_{m-2}: \cdots : e_2: e_1)_{D_n},
\end{equation*}
where $d_{n-1}\neq 0$ and $e_{m-1}\neq 0$. The fact that both $d_{n-1}$ and $e_{m-1}$ are $1$ give rises to
\begin{equation}\label{cntr}
   D_{n-1} \leq d_{n-1} D_{n-1} \leq x ~~~~
  \textrm{and}~~~~D_{m-1} \leq e_{m-1} D_{m-1} \leq x.
\end{equation}
Now we suppose that $n \neq m$. Without loss of generality, we can assume that $n<m$. Then by Lemma \ref{first} and the right side of the equation (\ref{cntr}), we obtain 
\begin{equation*}
    x < D_n \leq D_{m-1} \leq x,
\end{equation*}
which is a contraction. Thus we get $n=m$. 

Now we show that $d_i=e_i$ for all $1 \leq i \leq n-1$. In what follows, we proceed by induction on number of digits. From the equation (\ref{def}), the assertion is clear for $x=(d_1)_{D_n}=(e_1)_{D_n}$. We assume that a positive integer $x$ with k digits in the $D_n$-type number system has a unique representation. Now let $(d_{n-1}:d_{n-2}: \cdots : d_2: d_1)_{D_n}$ and $(e_{n-1}:e_{n-2}: \cdots : e_2: e_1)_{D_n}$ be two representations of $x$ in the $D_n$-type number system. Assume that $d_{n-1}\neq  e_{n-1}$. Without loss of generality, take $d_{n-1}< e_{n-1}$. Thus we get from Lemma \ref{second}
\begin{equation*}
    x < (d_{n-1}+1) D_{n-1} \leq e_{n-1} D_{n-1} \leq x,
\end{equation*}
which leads to a contradiction and hence $d_{n-1}= e_{n-1}$. Since $d_{n-1}= e_{n-1}$ and by the induction hypothesis, the integer $x-d_{n-1}D_{n-1}=x-e_{n-1}D_{n-1}$ has a unique representation and so $d_i=e_i$ for all $1 \leq i \leq n-2$. This completes the proof.

We will now give the method of how to write any positive integer $x$ in the $D_n$ type number system:

The algorithm proceeds in a series of steps. First step starts by dividing $x$ by $4$ and the reminder sets to $r_1=d_1$ in the division process
\begin{equation*}
    x=4q_1+r_1.
\end{equation*}
Then divide $q_1$ by $6$ and the reminder sets to 
$r_2=d_2$ in the following division process
\begin{equation*}
    q_1=6q_2+r_2.
\end{equation*}
If we continue these operations by dividing $q_{i-1}$ by $2(i+1)$ and take $r_i=d_i$ in the expression
\begin{equation*}
    q_{i-1}=2(i+1)q_i+r_i
\end{equation*}
until the quotient $q_t=0$ is zero for some integer $t$.
Eventually, we write the number x as 
\begin{equation}\label{intrep}
   x=(d_{n-1}:d_{n-2}: \cdots : d_2: d_1)_{D_n}
\end{equation}
in $D_n$-type base system.

We illustrate this construction with an example given below:  

\begin{ex}
We choose an integer $x=151100130419$. The expression of integer $x$ in $D_{12}$-type base system is $x=(3:15:6:9:8:5:4:5:7:2:3)_{D_{12}}$.
\end{ex}

\section{Integer Representation of Even-Signed Permutations}

\textit{$\boldsymbol{Subexceedant~~Function~~for~~Weyl~~Group~~of~~type~~D}$} : 

Mantaci and Rakotondrajao \cite{br3} defined subexceedant functions for the symmetric group $S_n$ and showed that there was one-to-one correspondence between permutations and the subexceedant functions. Subexceedant function is a fundamental tool to provide integer representations of the classical Weyl groups, see \cite{br1},\cite{br4}. We will define the subexceedant functions for the group of even-signed permutations by inspiring \cite{br4} and depending on the structure of the group.

\begin{defn}[\cite{br3}]
A subexceedant function $f$ on the set $\{1, \cdots, n\}$ is a map such that 
\begin{equation}
    1 \leq f(i) \leq i,~~\textrm{for}~~\textrm{all}~~1 \leq i \leq n.
\end{equation}
\end{defn}
Denote by $\mathcal{F}_n$ the set of all subexceedant functions on $\{1, \cdots, n\}$ and hence $|\mathcal{F}_n|=n!$. The subexceedant function $f$ on $\{1, \cdots, n\}$ is, in general, expressed by the word $f(1);\cdots; f(n)$. Moreover, the map 
\begin{equation}\label{subex}
   \phi : \mathcal{F}_n \mapsto S_n,~~\phi(f)=(nf(n))\cdots(2f(2))(1f(1)) 
\end{equation}
is a bijection and $(if(i))$ is a transposition for each $1 \leq i \leq n$ \cite{br3}.

Now let $\beta=[\beta_1, \cdots, \beta_n ]$ be an element of $S_n$ given in the window notation. In \cite{br3} Mantaci and Rakotondrajao described the subexceedant function $f$ corresponding to $\beta$ under the map $\phi$ with the following steps:
 \begin{itemize}
     \item Set $f(n)=\beta_n$.
     \item Then take the image of ${\beta}^{-1}(n)$ in the window notation of $\beta$ as $\beta_n$. Thus a new permutation $\beta'$ that contains $n$ as a fixed point is obtained and so $\beta'$ can be thinking as an element of $S_{n-1}$.
     \item Set $f(n-1)=\beta'_{n-1}$.
     \item Continue the same procedure for the permutation $\beta'$ by exchanging the image of ${\beta'}^{-1}(n-1)$ in the window notation of $\beta'$ and $\beta'_{n-1}$ and determine in this manner $f(n-2)$.
     \item Proceed with this iteration until you find all the $f(i)$ values for each $1 \leq i \leq n$.
 \end{itemize}

\begin{defn}
Let $x=(d_{n-1}:d_{n-2}: \cdots : d_2: d_1)_{D_n}$ be a number with the $n-1$ digits in the $D_n$-type number system. We define the subexceedant function $f$ on the set $I_n$ as follows: 
\begin{equation}\label{subexc}
f(1)=1,~~ f(i)=1+\lfloor{\frac{d_{i-1}}{2}}\rfloor,~~\textrm{for}~~\textrm{all}~~2 \leq i \leq n.
\end{equation}
\end{defn}
It is clear here that $1 \leq f(i) \leq i,~~\textrm{for}~~\textrm{all}~~1 \leq i \leq n$.
We define the function by the rule $\tau(x):=$the number of odd components appearing in the expression $x=(d_{n-1}:d_{n-2}: \cdots : d_2: d_1)_{D_n}$.
Having defined the sign $s_i=(-1)^{d_{i-1}}$ for all $2 \leq i \leq n$ and taken the sign $s_1=(-1)^{\tau(x)}$, we associate each $x=(d_{n-1}:d_{n-2}: \cdots : d_2: d_1)_{D_n}$ in the $D_n$-type number system to a unique even-signed permutation 
$$\alpha_x=[s_1 \beta_1, \cdots, s_n \beta_n ],$$
where $\beta_f=[\beta_1, \cdots, \beta_n]$ is the permutation of $S_n$ that is the image $\phi(f)$ of the subexceedant function $f$ under $\phi$ given in equation (\ref{subex}).

Thus, we map each positive integer $x$ given in the $D_n$-type number system to an element of the group of even-signed permutations. Conversely, we will now show how to associate any element of this group to a positive integer. For this purpose, we take any even-signed permutation $\pi=[s_1 \gamma_1, \cdots, s_n \gamma_n ]$, where $\gamma \in S_n$. First of all we determine the subexceedent function $f$ in relation to $\pi$ in the following manner:
\begin{enumerate}
    \item let $f=\phi^{-1}(\gamma) \in \mathcal{F}_n$
    \item \textrm{for}~~\textrm{all}~~$2 \leq i \leq n$, define $r_i = \left\{
\begin{array}{ll}
      0 & s_{i+1} > 0 \\
      1 & s_{i+1} < 0 \\
\end{array} 
\right.$
 \item set $d_i=2(f(i+1)-1)+r_i, \textrm{for}~~\textrm{all}~~1 \leq i \leq n-1$
 \item establish $x=(d_{n-1}:d_{n-2}: \cdots : d_2: d_1)_{D_n}$.
\end{enumerate}
By checking the sign $s_1$ in the even-signed permutation $\pi$, it can be verified that the number of odd integer components involving in the expression of $x$ in ${D_n}$-type base system is odd or even. As a result of the above facts, we can state the following theorem without proof.

\begin{thm}
There is one-to-one correspondence between positive integers and elements of the group of even-signed permutations.
\end{thm}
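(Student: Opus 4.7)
The plan is to show that the forward construction $x \mapsto \alpha_x$ and the four-step backward construction $\pi \mapsto x$ described above are mutually inverse bijections. Since Lemma~\ref{first} and Theorem~\ref{exp} guarantee that the $D_n$-type number system represents exactly $2^{n-1}n!$ integers, matching the order $|D_n|=2^{n-1}n!$ of the group, a cardinality argument will let me reduce the work to well-definedness of both maps plus just one of the two compositions collapsing to the identity.

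First I would verify that the forward map actually lands in $D_n$, i.e., that $\alpha_x$ has an even number of negative entries. By construction $s_i=(-1)^{d_{i-1}}$ for $2\le i\le n$, so the number of negative signs among $s_2,\dots,s_n$ is precisely $\tau(x)$; the prescription $s_1=(-1)^{\tau(x)}$ then makes the total parity of negatives even, placing $\alpha_x$ in $D_n$. Dually, for the backward map the bounds $1\le f(i+1)\le i+1$ (since $\phi^{-1}(\gamma)\in\mathcal{F}_n$) together with $r_i\in\{0,1\}$ force $d_i=2(f(i+1)-1)+r_i\in\{0,1,\dots,2i+1\}$, so the output is a legitimate $D_n$-type number.

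The heart of the proof is digit-by-digit inversion. Writing the Euclidean decomposition $d_i=2\lfloor d_i/2\rfloor+(d_i\bmod 2)$, the forward map reads off the two halves as $f(i+1)=1+\lfloor d_i/2\rfloor$ and $s_{i+1}=(-1)^{d_i}$, while the backward map reassembles them as $2(f(i+1)-1)$ and $r_i$ respectively. Thus, once one invokes the Mantaci--Rakotondrajao bijection to handle the permutation part $\gamma\leftrightarrow f$, the compositions collapse to the identity on every digit $d_1,\dots,d_{n-1}$ and on every sign $s_2,\dots,s_n$.

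The one step that does not follow from purely local bookkeeping, and which I expect to be the main obstacle, is the recovery of the sign $s_1$: it is not encoded in any digit but is instead pinned down globally through $\tau(x)$. The resolution is exactly where the even-signed hypothesis is used essentially: for any $\pi\in D_n$ the parity of the number of negative entries among $s_2,\dots,s_n$ determines $s_1$, and after running the backward procedure this count coincides with $\tau(x)$. Consequently the identity $s_1=(-1)^{\tau(x)}$ holds automatically, closing the loop and explaining why the construction tailored to $D_n$ matches on the nose rather than only up to a global sign.
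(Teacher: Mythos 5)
Your proof is correct and follows exactly the route the paper intends: the paper states this theorem without proof, asserting that it follows from the two constructions given just before it, and your writeup supplies precisely the missing verifications (well-definedness of both maps, the digit-by-digit inversion $d_i = 2(f(i+1)-1)+r_i$ versus $f(i+1)=1+\lfloor d_i/2\rfloor$, the cardinality count $2^{n-1}n!$, and the recovery of $s_1$ from the even-sign condition via $\tau(x)$). No gaps.
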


\begin{ex}
When consider $x=151100130419=(3:15:6:9:8:5:4:5:7:2:3)_{D_{12}}$ we determine the subexceedant function depending on the equation(\ref{subexc}) as $f=f(1);f(2);f(3);f(4);f(5);f(6);f(7);f(8);f(9);f(10);f(11);\\f(12);=1;2;2;4;3;3;3;5;5;4;8;2$. Since $\tau(x)=7$ and so we get $\alpha_x=[-1,-11,\\-12,10,-6,7,-3,9,-5,-4,8,-2 ] \in D_{12}.$
\end{ex}

\begin{ex}
Let $\pi=[4,3,8,12,-9,-7,-10,-11,1,5,-2,-6 ] \in D_{12}$. We obtain the subexceedant function associated with $\pi$ as $f=f(1);f(2);f(3);f(4);\\f(5);f(6);f(7);f(8);f(9);f(10);f(11);f(12);=1;2;2;1;1;5;5;2;1;5;2;6$. Hence we get the integer representation of $\pi$ as
$$455941042762=(11:3:8:0:3:9:9:1:0:2:2)_{D_{12}}.$$
\end{ex}

\end{document}